\let\originalleft\left
\let\originalright\right
\renewcommand{\left}{\mathopen{}\mathclose\bgroup\originalleft}
\renewcommand{\right}{\aftergroup\egroup\originalright}
\renewcommand{\Re}{\operatorname{Re}}
\renewcommand{\Im}{\operatorname{Im}}
\newcommand{\meas}{\operatorname{meas}}
\newcommand{\R}{\mathbb{R}}
\newcommand{\N}{\mathbb{N}}
\newcommand{\eps}{\varepsilon}
\renewcommand{\hat}{\widehat}
\newtheorem{theorem}{Theorem}
\newtheorem{lemma}[theorem]{Lemma}
\theoremstyle{remark}
\newtheorem*{remark}{Remark}
\begin{document}
\title{Large deviations of the argument of the Riemann zeta function}
\author{Alexander Dobner}
\date{February 13, 2023}
\begin{abstract}
Let $S(t) = \frac{1}{\pi}\Im \log\zeta\left(\frac{1}{2}+it\right)$. We prove an unconditional lower bound on the measure of the sets $\{t\in [T,2T] \colon S(t) \geq V\}$ for $\sqrt{\log\log T} \leq V \ll \left(\frac{\log T}{\log \log T}\right)^{1/3}$. For $V \leq (\log T)^{1/3-\eps}$ our bound has a Gaussian shape with variance proportional to $\log\log T$. At the endpoint, $V \asymp \left(\frac{\log T}{\log \log T}\right)^{1/3}$, our result implies the best known $\Omega$-theorem for $S(t)$ which is due to Tsang. We also explain how the method breaks down for $V \gg \left(\frac{\log T}{\log \log T}\right)^{1/3}$ given our current knowledge about the zeros of the zeta function. Conditionally on the Riemann hypothesis we extend our results to the range $\sqrt{\log\log T} \leq V \ll \left(\frac{\log T}{\log \log T}\right)^{1/2}$.
\end{abstract}
\maketitle

\section{Introduction}
Let
\[
S(t) \coloneqq \frac{1}{\pi} \Im \log \zeta\left(\frac{1}{2}+i t\right).
\]
In this definition we assume that $\log \zeta(s)$ is defined with branch cuts extending horizontally to the left of each zero and pole of the zeta function, and we choose the branch such that $\log \zeta(2)$ is real. When $\frac{1}{2}+it$ lies on a branch cut, $S(t)$ has a jump discontinuity, and we set $S(t)\coloneqq \lim_{\eps\to 0} \left(S(t+\eps)+S(t-\eps)\right)/2$ in this case. 

The behavior of $S(t)$ is closely connected to the locations of the nontrivial zeros of the zeta function. Let $N(t)$ denote the number of zeta zeros $\rho=\beta+i\gamma$ with $0 < \gamma \leq t$ such that any zeros with $\gamma=t$ are counted with weight $1/2$. The Riemann-von Mangoldt formula states that
\begin{equation} \label{eq:rvm}
N(t) = \frac{t}{2\pi}\log\frac{t}{2\pi} - \frac{t}{2\pi} + \frac{7}{8}+S(t) + O\left(t^{-1}\right), \quad t\geq 1.
\end{equation}
It is a classical result of Backlund (see, e.g., \cite[Thm. 9.4]{titchmarsh}) that $S(t)=O(\log t)$. From this it is clear that the behavior of $N(t)$ is largely controlled by the first two terms in \eqref{eq:rvm}. One should think of $S(t)$ as measuring the error between $N(t)$ and the ``expected'' number of zeta zeros up to height $t$. Hence, large values of $S(t)$ reflect irregularities in the locations of the nontrivial zeta zeros.

\subsection{Main result}

In this paper we consider the question of how often $S(t)$ is large for $t \in [T,2T]$. The distribution of $S(t)$ was first studied by Selberg \cite{selberg1946} who showed that the moments of $S(t)$ on $[T,2T]$ are asymptotic to the moments of a normal distribution with mean zero and variance  $\frac{1}{2\pi^2}\log\log T$. From this one can deduce the \emph{Selberg central limit theorem}, which states that for fixed $\Delta > 0$,
\begin{equation} \label{eq:selbergclt}
\frac{1}{T} \meas \left\{t\in[T,2T] \colon \frac{\pi S(t)}{\sqrt{\frac{1}{2}\log\log T}} \geq \Delta \right\} = \frac{1}{\sqrt{2\pi}} \int_\Delta^\infty e^{-u^2/2}\, du + o(1)
\end{equation}
as $T$ tends to infinity. It is natural then to ask whether this Gaussian behavior persists in a large deviations regime where $\Delta$ is allowed to grow with $T$. We prove the following theorem giving a lower bound on the frequency of such large deviations.

\begin{theorem} \label{Sthm}
There exist constants $\kappa, D > 0$ such that for all $T$ sufficiently large and all $V$ in the range $\sqrt{\log \log T} \leq V \leq \kappa \left(\frac{\log T}{\log \log T}\right)^{\frac{1}{3}}$ we have
\[
\meas\, \left\{t \in [T,2T] \colon S(t) \geq V \right\} \geq T \exp\left(-D \frac{V^2}{\log \frac{\log T}{V^3 \log V}} \right).
\]
The same is true if $S(t)$ is replaced with $-S(t)$.
\end{theorem}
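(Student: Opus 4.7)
\emph{Plan.} The proof should follow Soundararajan's resonance method with one essential adaptation. Because $S(t)$ is an imaginary part, the natural real resonator $R(t) = \sum_n r(n)n^{-it}$ with $r(n)\geq 0$ would make $\int_T^{2T}\sin(t\log p)|R|^2\,dt$ vanish in the main term by symmetry between the pairs $(m,n)=(n_0,n_0p)$ and $(n_0p,n_0)$, killing the contribution of $\pi S(t) \approx -\sum_p p^{-1/2}\sin(t\log p)$ entirely. I would instead use the complex resonator
\[
R(t) = \prod_{p\in\mathcal{P}}\bigl(1 - i g(p) p^{-it}\bigr) = \sum_{n\in\mathcal{N}} g(n)\,(-i)^{\omega(n)} n^{-it},
\]
where $\mathcal{N}$ denotes the squarefree integers built from a chosen set $\mathcal{P}$ of primes, $g(p)\geq 0$, and $\omega(n)$ counts distinct prime factors of $n$. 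Then
\[
|R(t)|^2 = \prod_{p\in\mathcal{P}}\bigl(1 + g(p)^2 - 2g(p)\sin(t\log p)\bigr),
\]
which is manifestly nonnegative and is amplified exactly when $\sin(t\log p)<0$ for many $p$, i.e.\ when $S(t)$ is large and positive.

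\emph{Moments.} Starting from a Selberg-type approximate formula $\pi S(t) = -\sum_{p\leq x} p^{-1/2}\sin(t\log p) + E(t,x)$ and extracting diagonals from the product expansion of $|R|^2$ yields
\[
\frac{M_1}{M_0} := \frac{\int_T^{2T}\pi S(t)|R|^2\,dt}{\int_T^{2T}|R|^2\,dt} \approx \sum_{p\in\mathcal{P},\, p\leq x}\frac{g(p)}{\sqrt p\,(1+g(p)^2)},
\]
with $M_0 \approx T\prod_{p\in\mathcal{P}}(1+g(p)^2)$. The corresponding second-moment computation gives $M_2/M_0 \approx (M_1/M_0)^2 + O(\log\log T)$. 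Since $V\geq (\log T)^a$ dominates $\sqrt{\log\log T}$, Chebyshev's inequality concentrates the probability measure $|R(t)|^2\,dt/M_0$ on $\{S(t)\geq V\}$ whenever $M_1/M_0 \gtrsim V$. Converting to Lebesgue measure via $\sup_t|R(t)|^2 \leq \bigl(\sum_n|r(n)|\bigr)^2 = \prod_{p\in\mathcal{P}}(1+g(p))^2$ gives
\[
\meas\{t\in[T,2T]:S(t)\geq V\} \gtrsim T\prod_{p\in\mathcal{P}}\frac{1+g(p)^2}{(1+g(p))^2}.
\]
Optimizing the pair $(\mathcal{P},g)$ subject to the constraint $M_1/M_0 \approx V$ should make the right-hand side at least $T\exp(-cV^2)$ throughout the stated range of $V$. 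The statement for $-S(t)$ follows symmetrically by replacing $(-i)^{\omega(n)}$ with $i^{\omega(n)}$.

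\emph{Main obstacle.} The hardest step is the unconditional control of $E(t,x)$, a smoothed sum over zeros of $\zeta$ in a short window around $\tfrac12+it$. Without RH, a zero off the critical line can make $E(t,x)$ locally as large as $\log T$, so $\int_T^{2T}E(t,x)|R(t)|^2\,dt$ must be bounded in mean square using the Selberg-type moment estimate $\int_T^{2T}|S(t)|^{2k}\,dt \ll T(Ck\log\log T)^{k}$ combined with classical zero-density bounds such as $N(\sigma,T)\ll T^{c(1-\sigma)}\log T$. These estimates couple the truncation $x$ of the prime sum, the length $\prod_{p\in\mathcal{P}} p$ of the resonator, and the target $V$; balancing them caps $V$ at $(\log T/\log\log T)^{1/3}$, which matches Tsang's $\Omega$-threshold and explains why one loses both the wider range and the $1/\log\log T$ improvement in the exponent that Soundararajan achieved for $\Re\log\zeta$. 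The lower endpoint $V\geq(\log T)^a$ ensures that the main term dominates the lower-order errors throughout.
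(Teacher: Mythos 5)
Your overall architecture (a Selberg-type explicit formula, a resonator, and a zero-density estimate, with the balance capping $V$ at $(\log T/\log\log T)^{1/3}$) matches the paper's, and the complex resonator is a legitimate way to couple to $\Im$ rather than $\Re$ (the paper instead builds a factor $-\sin(\tfrac23\lambda z)$ into the convolution kernel, so that after taking imaginary parts the prime sum becomes the \emph{real} part of a Dirichlet polynomial with nonnegative coefficients and Soundararajan's real resonator applies verbatim; this also keeps the kernel sign-definite, which is needed at the very end to pass from a large smoothed average of $S$ to a set of $t$ with $S(t)\geq V$ of measure $(\log T)^{-O(1)}$ — a step your write-up omits). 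However, there are two genuine gaps. First, your conversion to Lebesgue measure via $\sup_t|R(t)|^2\leq\prod_p(1+g(p))^2$ is too lossy to prove the theorem. It costs a factor $\exp\bigl(c\sum_p g(p)\bigr)$, and one can check that \emph{any} choice of $(\mathcal P,g)$ meeting the constraint $\sum_p g(p)/(\sqrt p\,(1+g(p)^2))\gtrsim V$ forces $\sum_p g(p)\gg V^2\log V\asymp V^2\log\log T$ (each prime contributes at most $\tfrac1{2\sqrt p}$ to the constraint, so the support must reach up to $p\asymp V^2\log^2V$ and contain $\gg V^2\log V$ primes with $g(p)\asymp1$, or worse). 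So the best you can get this way is $T\exp(-cV^2\log\log T)$, which misses the stated bound and, when balanced against the zeros exceptional set, caps $V$ at $(\log T)^{1/3}/\log\log T$, below Tsang's threshold. The fix is Soundararajan's Cauchy--Schwarz device: bound $\meas(F_V)\gg M_2^2(\log T)^{-2}\bigl(\int|R|^4\Phi\bigr)^{-1}$ and show $\int|R|^4\Phi\ll T\exp\bigl(O(\sum_pf(p)^2)\bigr)=T\exp(O(V^2))$; the point is that the fourth moment costs $\sum f(p)^2$, not $\sum f(p)$.

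Second, your treatment of the zeros term $E(t,x)$ by bounding $\int_T^{2T}E(t,x)|R(t)|^2\,dt$ "in mean square" cannot work. Any Cauchy--Schwarz splitting reintroduces $\bigl(\int|R|^4\bigr)^{1/2}\gg M_0\,e^{cV^2}T^{-1/2}$, so even a best-possible bound $\int E^2\ll T\log\log T$ yields $\int|E|\,|R|^2\ll M_0e^{cV^2}\sqrt{\log\log T}$, which swamps the target $VM_0$; and the high-moment bound $\int|S|^{2k}\ll T(Ck\log\log T)^k$ is downstream of controlling the zeros, not a tool for it. What is actually needed is to remove an exceptional set: prove the pointwise bound $Z_\pm(t)\ll\theta_t^2e^{\lambda\theta_t}\lambda\log T+1$ with $\theta_t=\max(\beta-\tfrac12)$ over $|\gamma-t|\leq\log^2T$ (this uses Selberg's unconditional local zero-count for smoothing width $1/\lambda=1/\log\log T$, which is exactly why $\lambda$ must be taken this small), and then use $N(\sigma,T)\ll T^{3/2-\sigma}\log^5T$ to show $\meas\{|Z_\pm|\geq V\}\ll T\exp\bigl(-c(V\log T/\lambda)^{1/2}\bigr)$. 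This is exponentially small compared with the resonance set $T\exp(-cV^2)$ precisely when $V\ll(\log T/\log\log T)^{1/3}$, and the two sets are then compared by measure, not inside the resonance integrals. Relatedly, your Chebyshev step requires computing $\int(\pi S)^2|R|^2$, which unconditionally entangles $|R|^2$ with the zeros and is not accessible; the paper sidesteps this by resonating only against the Dirichlet polynomial $D(t)$ and never forming a second moment of $S$ itself.
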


We now provide some commentary on this theorem. The lower endpoint on the range of $V$ in the theorem statement is not significant because for $V$ proportional to $\sqrt{\log\log T}$, the result is superseded by the Selberg central limit theorem. In fact, even in the larger range $V \ll (\log\log T)^{\frac{1}{2}+\frac{1}{10}-\eps}$ a result of Radziwi\l\l{ }\cite[Rmk.~(1)]{radziwill2011} states that
\begin{equation} \label{eq:RadAsymp}
\frac{1}{T}\meas\{t\in[T,2T]\colon \pi S(t) \geq V\} \sim \frac{1}{\sqrt{2\pi}} \int_\Delta^\infty e^{-u^2/2}\, du, \quad T\to\infty
\end{equation}
where $\Delta \coloneqq V/\sqrt{\frac{1}{2}\log\log T}$. In other words, the Gaussian behavior predicted by the Selberg central limit theorem persists in this range. To our knowledge, for deviations larger than the range in Radziwi\l\l's theorem, our Theorem 1 is the first lower bound on the frequency of large values of $S(t)$ in the literature.

At first glance the bound in Theorem 1 looks rather complicated. However, in the range $V \leq (\log T)^{1/3-\eps}$, the bound simplifies to $T \exp(-D_\eps V^2/\log\log T)$ for some constant $D_\eps>0$. Thus, in this range the bound has the shape of Gaussian whose variance is proportional to $\log\log T$. Up to the proportionality factor this matches the variance in the Selberg central limit theorem. For values of $V$ very near the endpoint $\kappa \left(\frac{\log T}{\log \log T}\right)^{\frac{1}{3}}$, the strength of the bound decreases, but it is still nontrivial. Hence, as a corollary of Theorem 1 we find that $S(t) = \Omega_\pm\left(\left(\frac{\log t}{\log\log t}\right)^{\frac{1}{3}}\right)$. This result is not new (it is due to Tsang \cite{tsang1986}) but it is the best known unconditional $\Omega$-theorem for $S(t)$. Tsang's result does not say anything about the frequency of such large values except that they occur in every interval $[T,2T]$. Theorem 1 provides a lower bound of the form $T\exp(-D_0 V^2)$ at this endpoint. 

\subsection{Proof methods}
The proof of Theorem~\ref{Sthm} has four parts. The first is a convolution formula of  Selberg~\cite{selberg1946} which expresses a smoothed version of $S(t)$ in terms of a contribution from zeta zeros off the critical line and a contribution from a Dirichlet polynomial. The second part of the proof is an upper bound on how often the contribution from the zeta zeros is large. To get this bound we use a zero density estimate and a theorem of Tsang on the moments of $S(t+h)-S(t)$ to get some control over the zeta zeros in the horizontal and vertical directions respectively. The third part of the proof is a lower bound on how often the Dirichlet series contribution in the convolution formula is large. This is done via an application of the \emph{resonance method} which was developed by Soundararajan~\cite{soundararajan2008}. Combining these three parts gives a lower bound on the frequency of large values of the smoothed $S(t)$.  In the fourth part of the proof we relate this back to the frequency of large values of $S(t)$.

If we assume the Riemann hypothesis, this eliminates the possibility of a contribution from zeta zeros off the critical line. In this case we are able to prove a large deviations result that holds for a larger range of $V$. We state this result now.

\begin{theorem} \label{Sthm2}
Assuming the Riemann hypothesis holds, there exist constants $\widetilde{\kappa}, \widetilde{D} > 0$ such that for all $T$ sufficiently large and all $V$ in the range $\sqrt{\log \log T} \leq V \leq \widetilde{\kappa} \left(\frac{\log T}{\log \log T}\right)^{\frac{1}{2}}$ we have
\[
\meas\, \left\{t \in [T,2T] \colon S(t) \geq V \right\} \geq T \exp\left(-\widetilde{D} \frac{V^2}{\log \frac{\log T}{V^2 \log V}} \right).
\]
The same is true if $S(t)$ is replaced with $-S(t)$.
\end{theorem}

The key difference between Theorem 1 and Theorem 2 is the change of exponent from $\frac{1}{3}$ to $\frac{1}{2}$ in the upper endpoint of the range of $V$. In Section 2 we provide a more detailed outline of our proof methods, and we explain how the smaller exponent $\frac{1}{3}$ comes about in the unconditional result.

\subsection{Related work}
There is a natural connection between the behavior of $S(t)$ and the behavior of the magnitude of $\zeta(s)$ on the critical line because
\[
S(t) = \frac{1}{\pi}\Im \log \zeta(\tfrac{1}{2}+it) \text{ whereas } \log |\zeta(\tfrac{1}{2}+it)|=\Re \log\zeta(\tfrac{1}{2}+it).
\]
A great deal of research has been done on large values of both of these functions. The most notable open problem in this area is the Lindel\"{o}f hypothesis, which states that $\log |\zeta(\tfrac{1}{2}+it)| = o(\log t)$. On the Riemann hypothesis, the slightly stronger $\log |\zeta(\tfrac{1}{2}+it)| = O(\log t/\log\log t)$ is known, and similarly for $S(t)$ (see, e.g., \cite[Thms. 14.13, 14.14(A)]{titchmarsh}). There is reason to believe that these upper bounds are still rather far from the truth. Indeed, probabilistic heuristics suggest that the true largest values of $\log |\zeta(\tfrac{1}{2}+it)|$ and $S(t)$ are of size $\asymp \sqrt{\log t\log\log t}$ (see \cite{farmer2007gh} for arguments for and against this).

Just like for $S(t)$ the Selberg central limit theorem is known to hold for $\log |\zeta(\frac{1}{2}+it)|$, so one can again study the frequency of large deviations. The analogue of Radziwi\l\l's{ }asymptotic result mentioned above still holds (see \cite[Thm. 1]{radziwill2011}). That is, for $V \ll (\log\log T)^{\tfrac{1}{2}+\tfrac{1}{10}-\eps}$,
\begin{equation} \label{eq:RadAsymp2}
\frac{1}{T}\meas\{t\in[T,2T]\colon \log|\zeta(\tfrac{1}{2}+it)| \geq V\} \sim \frac{1}{\sqrt{2\pi}} \int_\Delta^\infty e^{-u^2/2}\, du, \quad T\to\infty
\end{equation}
where $\Delta \coloneqq V/\sqrt{\frac{1}{2}\log\log T}$. In the same paper Radziwi\l\l{ }conjectures that this asymptotic should continue to hold in the entire range $V = o(\log\log T)$. Borrowing a term from the probability theory literature, we refer to this range as the \emph{moderate deviations} regime.

When $V=\alpha \log\log T$ for fixed $\alpha>0$, the frequency of large deviations of $\log |\zeta(\tfrac{1}{2}+it)|$ is of particular interest because these values are expected to be the main contributor to the $2\alpha$\textsuperscript{th} moment of the zeta function (see \cite{soundararajan2009} and also \cite{radziwill2011} for a discussion of this). Radziwi\l\l{ } gives a natural conjecture (see \cite[Conj. 2]{radziwill2011}) on the frequency of these large deviations as well. The asymptotic in this case is the same as in the moderate deviations case except for a constant factor depending on $\alpha$. There are some partial results in the literature in this direction. In particular, for $0<\alpha<2$, Arguin and Bailey \cite{arguin2023b} have recently proved an upper bound on the frequency of large deviations which is nearly optimal in that it matches the conjecture except for the constant factor. Soundararajan \cite{soundararajan2009} has proved similar upper bounds that hold for a much larger range of $V$ but which are not quite as sharp and which are conditional on the Riemann hypothesis.

In a similar vein to the theorems in this paper, Soundararajan \cite{soundararajan2008} has also proved a \emph{lower} bound on the frequency of large deviations of $\log |\zeta(\frac{1}{2}+it)|$. Unlike for $S(t)$, the possibility of zeta zeros off the critical line does not seem to affect the strength of the results one can prove for $\log |\zeta(\frac{1}{2}+it)|$ (see the remark at the end the next section for more on this). Consequently, the strength of the bound and the range of $V$ in \cite[Thm. 1]{soundararajan2008} are comparable to our Theorem~2, but Soundararajan's result is unconditional.

With regards to $\Omega$-theorems, it is interesting to note that the best conditional result for $S(t)$ goes even further than the endpoint $(\log T/\log\log T)^{1/2}$. Indeed, on the Riemann Hypothesis it is known that $S(t) = \Omega_\pm\left(\sqrt{\frac{\log t \log\log\log t}{\log\log t}}\right)$. The extra $\sqrt{\log\log\log t}$ factor comes from a recent breakthrough of Bondarenko and Seip \cite{bondarenko2018s}. Unfortunately, Bondarenko and Seip's method only produces these large values on a very sparse set. The fact that Bondarenko and Seip's method can be used to get large values of either sign was noted by Chirre and Mahatab \cite{chirre2021m}.

\section{Further details on proof methods} \label{sec:heuristics}
The standard method for studying the distribution of $\log \zeta$ on the critical line is to use an \emph{explicit formula} to express $\log \zeta(\tfrac{1}{2}+it)$ in terms of a sum over primes (more specifically a Dirichlet polynomial) plus a sum over the zeros of the zeta function near $\tfrac{1}{2}+it$. In this paper we will use an explicit formula for a \emph{smoothing} of $\log \zeta$. This approach was first used by Selberg in his work on large values of $S(t)$ (see \cite[Section 7]{selberg1946}). Roughly speaking, if one convolves $S(t)$ with an approximation to the identity of width $\lambda^{-1}$ (where $\lambda$ is a parameter) then the resulting formula looks like
\begin{equation} \label{eq:Sheuristic}
\text{smoothed $S(t)$} \approx \frac{1}{\pi} \Im \sum_{\log p \ll \lambda} \frac{1}{\sqrt{p}}p^{-it} + \sum_{\substack{\beta > 1/2 \\ |\gamma -t| \ll \lambda^{-1}}} G_{t,\lambda}(\rho)
\end{equation}
where the second sum is over zeta zeros $\rho = \beta+i\gamma$, and $G_{t,\lambda}$ is some function whose definition is not so important for now. A key feature of this formula is that only the zeta zeros to the right of the critical line contribute, and (as we shall see) the contribution of zeros close to the critical line will be rather small. Since large values of $S(t)$ should occur at least as often as large values of a smoothing of $S(t)$ (this will be formalized in Section 6),  it will suffice to consider the quantity on the right-hand side of~\eqref{eq:Sheuristic}.

Note that if we assume the Riemann hypothesis then \eqref{eq:Sheuristic} has no contribution coming from zeta zeros. In this case the problem of finding large values of $S(t)$ is reduced to finding large values of the imaginary part of a certain Dirichlet polynomial. If one just wants to prove $\Omega$-results, there are several methods appearing in the literature: computing moments of the Dirichlet polynomial (see Tsang \cite{tsang1986}), using Dirichlet's approximation theorem (see Montgomery \cite{montgomery1977}), or using the resonance method (see Bondarenko and Seip \cite{bondarenko2018s}). The resonance method also allows one to get a lower bound on the frequency of large values rather easily, so this is the approach we will take in this paper.

If we do not assume the Riemann hypothesis, we must account for the possibility of a contribution from zeta zeros off the critical line. Tsang's proof of his unconditional $\Omega$-theorem for $S(t)$ involves comparing high moments of the Dirichlet polynomial contribution in \eqref{eq:Sheuristic} with high moments of the zeros contribution and then applying a clever lemma to deduce the existence of large values of $S(t)$.  Our approach will be a bit more direct. Given some $V$ we first prove that the magnitude of the contribution of zeta zeros in the convolution formula is less than $V$ for all $t \in [T,2T]$ outside of some exceptional set of small measure. Next we show that the Dirichlet polynomial contribution is larger than $2V$ on a set whose measure is much larger than the exceptional set. From this we can deduce a bound on how often the sum of these two contributions is greater than $V$. 


We now perform a heuristic analysis of the Dirichlet polynomial contribution and the zeta zeros contribution in \eqref{eq:Sheuristic}. We start with the Dirichlet polynomial. Given any $V$ (which we think of as growing slowly with $T$) we would like to know how often the Dirichlet polynomial is greater than $2V$. First observe that the terms in the sum coming from primes smaller than $V$ (say) are inconsequential because they can contribute at most $\sum_{p\leq V} \frac{1}{\sqrt{p}} = o(V)$ to the total. This immediately implies that we must take $\lambda \gg \log V$ if the sum is ever to be larger than $V$. For the remaining terms, we can model the sum by a random sum since the phases $p^{-it}$ behave roughly like i.i.d. random variables with mean zero. The variance of this random sum is proportional to  
\[
\sum_{\log V \leq \log p \ll \lambda} \frac{1}{p} = \log \lambda - \log \log V +O(1)
\]
 by Mertens' theorem. So if the distribution of the random sum is approximately Gaussian (even for large deviations), then this model predicts that our deterministic sum is greater than $2V$ on a subset of $[T,2T]$ whose measure is at least
\begin{equation} \label{eq:primesmeasbd}
T\exp\left(-C \frac{V^2}{\log\frac{\lambda}{\log V}}\right).
\end{equation}
for some $C>0$. We shall see in Section~\ref{sec:DirichletCont} that it is possible to prove a bound of precisely this form (assuming that $\lambda$ and $V$ lie in some appropriate ranges) using Soundararajan's resonance method. The fact that we require $V \ll (\log T/\log \log T)^{1/2}$ in Theorem 2 comes from the limitations of what we are able to prove using the resonance method.

Next we analyze the contribution of the zeta zeros in \eqref{eq:Sheuristic}. The average spacing of zeta zeros near height $T$ is $\frac{2\pi}{\log T}$, so we expect that usually there are about $O(\lambda^{-1}\log T)$ terms in this sum. Later we will see that the magnitude of each summand $G_{t,\lambda}(\rho)$ is roughly $O(\lambda^2 (\beta-\frac{1}{2})^2)$. Hence, if we let $\theta_t \coloneqq \max (\beta-\frac{1}{2})$ where the maximum is taken over all the zeros in the sum, then by the triangle inequality we find
\begin{equation} \label{eq:zerosbd}
\sum_{\substack{\beta > 1/2 \\ |\gamma -t| \ll \lambda^{-1}}} G_{t,\lambda}(\rho) \ll \lambda^2 \sum_{\substack{\beta > 1/2 \\ |\gamma -t| \ll \lambda^{-1}}} (\beta-\frac{1}{2})^2 \ll \lambda \theta_t^2 \log T.
\end{equation}
If this bound holds for a given $t\in[T, 2T]$, then the contribution of the zeros is guaranteed to be less than $V$ whenever
\begin{equation} \label{eq:thetatbd}
\theta_t \ll \left(\frac{V}{\lambda\log T}\right)^{\frac{1}{2}}
\end{equation}
for some sufficiently small implicit constant. Hence, we would like to bound the measure of the exceptional set of $t$ values where \eqref{eq:thetatbd} fails to hold. To do this we need a bound on the number of zeta zeros lying a specified distance off the critical line. Such bounds are known as \emph{zero density estimates}. We will see in Lemma~\ref{EVlemma} that by using a classical zero density estimate we can bound the measure of our exceptional set by
\begin{equation} \label{eq:zerosmeasbd}
T \exp(10 \lambda) \exp\left(-c \left(\frac{V\log T}{\lambda}\right)^{\frac{1}{2}}\right)
\end{equation}
where $c$ is some positive absolute constant.

We now explain how the exponent $\frac{1}{3}$ arises in Theorem 1. Given a particular choice of $V$, we would like to select a value of the parameter $\lambda$ so that the lower bound \eqref{eq:primesmeasbd} is larger than the upper bound \eqref{eq:zerosmeasbd}. This is only possible if $V$ is not too large. To find the endpoint, suppose that $(\log T)^{1/10} \leq V \leq \log T.$ We saw in our analysis of the Dirichlet series that we must choose $\lambda \gg \log V \gg \log \log T$. In light of this, write $\lambda = A_{V,T} \log\log T$ where $A_{V,T} \gg 1$ is some quantity which may grow with $V$ or $T$ in some way. The lower bound \eqref{eq:primesmeasbd} becomes
\[
T\exp \left(-C' \frac{V^2}{\log A_{V,T}}\right)
\]
whereas the upper bound \eqref{eq:zerosmeasbd} becomes
\[
T \exp(10 A_{V,T}\log\log T) \exp\left(-c'\left(\frac{V \log T}{A_{V,T}\log\log T}\right)^{\tfrac{1}{2}}\right).
\]

A short calculation now shows that the crossover point between these two bounds occurs when $V \asymp \left(\frac{\log A_{V,T}}{A_{V,T}}\frac{\log T}{\log\log T}\right)^{\frac{1}{3}}$. Hence, taking $A_{V,T}\asymp 1$ is the best choice for making this crossover occur at the largest possible value of $V$. This is where the endpoint $V \leq \kappa \left(\frac{\log T}{\log\log T}\right)^{\frac{1}{3}}$ comes from in Theorem 1. For values of $V$ less than this endpoint, one needs to choose a larger value of $\lambda$ in order to get the right measure bound in the conclusion of Theorem~\ref{Sthm}. For example, consider the case where $V = (\log T)^{\frac{1}{3}-\eps}$ for some $0 < \eps < \frac{1}{3}$. Letting $\lambda = (\log T)^\mu$ for some small $\mu$ depending on $\eps$, one sees that the lower bound \eqref{eq:primesmeasbd} is at least
\[
T \exp\left(-D_\eps \frac{V^2}{\log \log T}\right)
\]
for some constant $D_\eps>0$ depending on $\eps$ (note that this bound now contains the correct variance factor $\log\log T$). One can then check that the upper bound \eqref{eq:zerosmeasbd} is much smaller than this as long as $\mu$ is chosen to be small enough.

\begin{remark}
The most natural target for improving Theorem 1 would be to get a better bound on the contribution of the zeta zeros. In the next section we will see a precise statement of Selberg's convolution formula for $\log \zeta$, and then it will be clear exactly what the contribution of each individual zero looks like. Interestingly, if we apply the methods we have described above to $\Re \log \zeta(\tfrac{1}{2}+it)$ rather than $S(t)$ it is possible to avoid most of the difficulties coming from the contribution of the zeros. The reason for this is that if one chooses an appropriate kernel function in Selberg's formula, one can ensure that the \emph{real} parts of all the terms coming from zeta zeros which are slightly to the right of the critical line are positive. This means that such terms are not detrimental for finding large values (see Tsang \cite{tsang1993} for an example of this idea). Carrying out this argument we can reproduce Soundararajan's large deviations result mentioned above which is valid in the range $V \ll (\log T/\log\log T)^{1/2}$. Unfortunately, when one considers the \emph{imaginary} part, as we will in this paper, then each zero may contribute positively or negatively depending on its height, and so we are forced to apply the triangle inequality and bound the magnitude of each term.
\end{remark}

\section{Convolution formula} \label{sec:Conv}
We now begin the full proof Theorems 1 and 2. From here onward we assume $t \in [T,2T]$ where $T$ is large. We start by quoting a lemma given in \cite{tsang1986} which is a general form of the convolution formula for $\log \zeta$ discovered by Selberg. 
\begin{lemma} \label{tsanglemma}
Suppose $1/2 \leq \sigma < 1$ and let $K(x+iy)$ be an analytic function in the horizontal strip $\sigma-2 \leq y \leq 0$ satisfying the growth condition
\[
W(x) \coloneqq \sup_{\sigma-2 \leq y \leq 0} |K(x+iy)| = O\left((|x| \log^2 |x|)^{-1}\right)
\]
as $|x| \to \infty$. For any $t \neq 0$, we have
\begin{multline}
\int_{-\infty}^{\infty} \log \zeta\left(\sigma+i(t+u)\right)K(u)\, du = \\
\sum_{n=2}^{\infty} \hat{K}(\log n) \frac{\Lambda(n)}{\log n}  n^{-\sigma-it}
 + 2\pi \sum_{\substack{\rho = \beta+i \gamma \\ \beta>\sigma}} \int_0^{\beta-\sigma} K(\gamma - t - i\alpha )\, d\alpha + O(W(t)).
\end{multline}
\end{lemma}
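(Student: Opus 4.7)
The plan is a contour shift argument. View the integrand $\log \zeta(\sigma + i(t+u))\, K(u)$ as a function of complex $u$, analytic on the strip $\sigma - 2 \leq \Im u \leq 0$ except for the branch cuts of $\log\zeta$ inherited from the zeros of $\zeta$ and the pole at $s = 1$. The goal is to shift the contour of integration from $\Im u = 0$ down to $\Im u = \sigma - 2$, on which the argument satisfies $\Re\bigl(\sigma + i(t+u)\bigr) = 2$ and $\log\zeta$ has an absolutely convergent Dirichlet expansion, while the singularities crossed will produce the sum over zeros and the error term.

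For the shifted contour I would expand $\log\zeta\bigl(2 + i(t+u')\bigr) = \sum_n \Lambda(n)/(\log n)\, n^{-2-it-iu'}$ and interchange sum and integral. Contour-shifting the Fourier transform identity gives
\[
\int_{-\infty}^\infty K\bigl(u' + i(\sigma - 2)\bigr)\, n^{-iu'}\, du' = n^{2 - \sigma}\, \hat K(\log n),
\]
and combining with the $n^{-2 - it}$ factor reproduces the prime sum $\sum_n \frac{\Lambda(n)}{\log n} \hat K(\log n) n^{-\sigma - it}$ on the right-hand side.

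The singularities crossed during the shift are the horizontal (in $s$-coordinates) branch cuts of $\log\zeta$, one per zero and one from the pole at $s = 1$. Under $s = \sigma + i(t+u)$, the branch cut of a zero $\rho = \beta + i\gamma$ becomes the vertical segment from $u = (\gamma - t) - i(\beta - \sigma)$ up to $u = \gamma - t$, which lies in the strip exactly when $\beta > \sigma$ (the upper bound $\beta \leq 1 < 2$ is automatic for nontrivial zeros). The jump $[\log\zeta]_{\mathrm{right}} - [\log\zeta]_{\mathrm{left}} = 2\pi i$ across the cut (with ``right'' in the $u$-plane corresponding to ``above'' in the $s$-plane), combined with summing the two line integrals along either side and the substitution $\alpha = \beta - \sigma - r$, produces the claimed contribution $2\pi \int_0^{\beta - \sigma} K(\gamma - t - i\alpha)\, d\alpha$. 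The pole at $s = 1$ contributes an analogous vertical cut of length $1 - \sigma$ at $\Re u = -t$, and its contribution is bounded by $(1 - \sigma)\sup_\alpha |K(-t - i\alpha)|$, i.e., $O(V(t))$ by the hypothesized decay of $K$.

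The main obstacle is making the contour manipulation rigorous. I would first truncate at $\Re u = \pm X$ and bound the vertical-side integrals using a convexity-type estimate such as $\log\zeta(\sigma + iT) \ll \log|T|$ away from zeros together with the $(|x|\log^2|x|)^{-1}$ decay of $K$, choosing $X$ along a sequence avoiding clusters of zero ordinates so that the side integrals vanish as $X \to \infty$. When $\sigma = 1/2$ the starting contour passes through critical zeros (integrable logarithmic singularities of the integrand), and I would handle this by shifting the starting contour infinitesimally and appealing to the principal-value convention used to define $S(t)$. Finally, absolute convergence of the sum over off-critical-line zeros follows from classical zero-density estimates $N(\sigma, T) \ll T^A$ with $A < 1$ combined with the real-direction decay of $K$.
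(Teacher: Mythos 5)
Your proposal is correct, and it is essentially the proof of this lemma: the paper itself does not prove the statement but quotes it from Tsang (who in turn generalizes Selberg), and the argument there is exactly your contour shift to $\Re s = 2$, the Dirichlet-series expansion with $\int K(u'+i(\sigma-2))n^{-iu'}\,du' = n^{2-\sigma}\hat K(\log n)$, and the collection of the $2\pi i$ jumps across the branch cuts of the zeros with $\beta>\sigma$ and of the pole at $s=1$ (the latter giving the $O(V(t))$ term). Your handling of the technical points --- truncating at abscissas whose ordinates stay $\gg 1/\log$ away from zeros so the side integrals vanish, indenting around the integrable logarithmic singularities on the line $\Re s=\sigma$ when $\sigma=1/2$, and the convergence of the zero sum (for which the crude bound $N(t+1)-N(t)\ll\log t$ together with the $(|x|\log^2|x|)^{-1}$ decay already suffices) --- is sound.
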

The second summation above runs over all zeta zeros lying strictly to the right of the line $\Re(s) = \sigma$. The convention used here for the Fourier transform is $\hat{K}(x) \coloneqq \int K(u) e^{-i x u}\, du$.

We will be applying the lemma with a kernel $K_\lambda(x)=\lambda \omega(\lambda x)$ where $\omega$ is a fixed ``weight'' function and $\lambda \in [1, \log T]$ is a parameter (to be chosen later possibly depending on $T$ and $V$). The precise choice of $\omega$ is not so important, but for concreteness we select the function
\[
\omega(u) \coloneqq \frac{1}{2\pi} \left(\frac{\sin(u/2)}{u/2}\right)^2
\]
whose Fourier transform is $\hat{\omega}(x) = \max(0,1-|x|)$.
\[
K_{\lambda}(z) \coloneqq \lambda \omega\left(\lambda z\right).
\]
With this choice of $\omega$ one can verify that $K_\lambda$ has the following properties:
\begin{enumerate}[\quad (i)]
\item $K_\lambda(u) \geq 0$ for $u \in \R$,
\item $\int K_\lambda(u) \, dx = 1$,
\item $\Im K_{\lambda}(u+iv) \ll \frac{\lambda^2 |v| e^{\lambda |v|}}{1+(\lambda u)^2+(\lambda v)^2}$,
\item 
$\widehat{K_\lambda}(x) = \hat{\omega}\left(\frac{x}{\lambda}\right)$. \label{kerneltransform}
\end{enumerate}
(For a proof of property (iii) see \cite[pg. 52]{selberg1946}.)

Applying Selberg's convolution formula with $K_\lambda$, we now prove that a short average of $S(t)$ can be expressed in terms of a Dirichlet polynomial and some contribution from zeta zeros.
\begin{lemma} \label{Sconvlemma} Suppose $1 \leq \lambda \leq \log T$ and $T$ is large. For any $t\in [T,2T]$,
\[
\int_{-\log T}^{\log T} S(t+u) K_\lambda(u)\, du = \frac{1}{\pi} \Im D_\lambda(t) + Z_{\lambda}(t) + O\left(1\right)
\]
where
\begin{align*}
D_\lambda(t) &= \sum_{k=2}^{\infty} \hat{\omega}\left(\frac{\log k}{\lambda}\right) \frac{\Lambda(k)}{(\log k) \sqrt{k}}\,  k^{-it}, \\
Z_\lambda(t) &=  2 \Im \sum_{\beta>\frac{1}{2}} \int_0^{\beta-\frac{1}{2}} K_\lambda(\gamma - t - i\alpha )\, d\alpha.
\end{align*}
\end{lemma}
\begin{proof}
Applying Lemma \ref{tsanglemma} with the kernel $K_\lambda$ and $\sigma=1/2$, we see that
\begin{multline*}
\int_{-\infty}^{\infty} \log \zeta\left(\frac{1}{2}+i(t+u)\right)K_{\lambda}(u)\, du =
\sum_{n=2}^{\infty} \hat{\omega}\left(\frac{\log n}{\lambda}\right) \frac{\Lambda(n)}{\log n} n^{-\frac{1}{2}-it} \\
 + 2\pi \sum_{\substack{\rho = \beta+i \gamma \\ \beta>\frac{1}{2}}} \int_0^{\beta-\frac{1}{2}} K_{\lambda}(\gamma - t - i\alpha )\, d\alpha + O\left(W(t)\right).
\end{multline*}
where
\[
W(t) = \sup_{-3/2 \leq y \leq 0} |K_\lambda(t + iy)|.
\]

Taking imaginary parts and dividing through by $\pi$ we get
\begin{equation} \label{eq:convKab}
\int_{-\infty}^{\infty} S(t+u) K_{\lambda}(u)\, du = \frac{1}{\pi} \Im D_\lambda(t) + Z_{\lambda}(t) + O\left(W(t)\right).
\end{equation}

Using the assumptions $t \in [T, 2T]$ and $\lambda \leq \log T$ and the inequality $|\sin(a+ib)| \ll e^{|b|}$, we see that
\[
W(t) \ll \log T \frac{e^{\frac{3}{2}\log T}}{T^2} \ll 1.
\]

All that remains is to show that the tails of the integral in \eqref{eq:convKab} are small. Using the classical bound $S(t) = O(\log(|t|+2))$ and the bound $K_\lambda(u) \ll \lambda^{-1} u^{-2}$, we see
\[
\int_{|u|>\log T} S(t+u) K_{\lambda}(u)\, du \ll \int_{\log T}^\infty (\log T + \log u) \lambda^{-1} u^{-2}\, du \ll 1.
\]
Hence, the tails may be absorbed into the error term.
\end{proof}

\section{Contribution of the zeros} \label{sec:ZerosCont}
Our goal in this section will be to prove that the zeros contribution $Z_\lambda(t)$ is small outside of some exceptional set whose measure we can bound. 

The average gap between consecutive zeta zeros with imaginary parts in $[T,2T]$ is asymptotic to $\frac{2\pi}{\log T}$, so we expect that in most short subintervals $[t,t+h]$ there should be approximately $\frac{h}{2\pi}\log T$ zeros. We will need a quantitative estimate on how often a short interval contains ``too many'' zeros.

\begin{lemma} \label{zeroslemma}
There exists an absolute constant $d > 0$ such that for all $T \geq 3$ and $1/\log T \leq h \leq 2$ the bound
\begin{equation} \label{eq:shortintbd}
\meas \left\{t \in [T,2T] \colon N(t+h)-N(t) \geq h \log T \right\} \ll T \exp\left(-d h \log T\right)
\end{equation}
holds uniformly.
\end{lemma}

We could not locate a reference for such a bound in the literature, so we supply a proof. The main input in the proof is the following moment estimate due to Tsang.

\begin{lemma}[Tsang {\cite[Thm. 4]{tsang1986}}] \label{momentlemma}
For any positive integer $k$ and any $0 < h < 1$,
\begin{multline}
\int_T^{2T} (S(t+h)-S(t))^{2k}\, dt \\
= A_k T \left(\log(2+h \log T)\right)^k+O\left(T(ck)^k\left(k^k+\left(\log(2+h\log T)\right)^{k-1/2}\right)\right)
\end{multline}
where $c$ is a positive constant and
\[
A_k \coloneqq \frac{(2k)!}{2^k \pi^{2k}k!}.
\]
\end{lemma}

\begin{proof}[Proof of Lemma~\ref{zeroslemma}]
We may assume $h \log T$ is larger than some fixed constant to be chosen later (and hence $T$ is large as well) because for small $h \log T$ the conclusion of the lemma is trivial.

From the Riemann-von Mangoldt formula one sees that
\[
N(t+h)-N(t) = \frac{h}{2\pi} \log \frac{t}{2\pi} + S(t+h)-S(t) + O(T^{-1})
\]
for any $t \in [T,2T]$. Hence,
\[
\left\{t \in [T,2T] \colon N(t+h)-N(t) \geq h \log T\right\} \subseteq \left\{t \in [T,2T] \colon |S(t+h)-S(t)| \geq \frac{1}{2} h \log T\right\}
\]
for all sufficiently large $T$. Let $G$ denote the set on the right-hand side. It suffices to show that
\begin{equation} \label{eq:Gbd}
\meas G \ll T \exp(-d h \log T).
\end{equation}
To do this we use Markov's inequality and the moment estimate from Lemma~\ref{momentlemma}.

Note that since $h \geq 1/\log T$ we have
\[
\log(2 + h \log T) \leq 1+h\log T \leq 2h \log T.
\]
Inserting this into Lemma~\ref{momentlemma} and applying Stirling's approximation to estimate $A_k$ we get an upper bound
\[
\int_T^{2T} (S(t+h)-S(t))^{2k}\, dt \leq T (c' k)^k ((h \log T)^k+k^k)
\]
where $c'>0$ is some absolute constant and $k$ can be taken to be any positive integer. From Markov's inequality we have
\[
\meas G \leq \frac{T (c' k)^k ((h \log T)^k+k^k)}{\left(\frac{1}{2}h \log T \right)^{2k}}.
\]
If $h \log T \geq 8 c'$, then applying this bound with 
\[
k = \left\lfloor \frac{1}{8c'} h\log T\right\rfloor \geq 1
\]
gives \eqref{eq:Gbd}. Hence we have proved the lemma for $h \log T$ larger than some absolute constant. 
\end{proof}

We will also require a \emph{zero density estimate} on the number of zeros off the critical line.  Let $N(\sigma, T)$ denote the number of zeta zeros $\rho = \beta+i\gamma$ for which $\beta > \sigma$ and $0 \leq \gamma \leq T$. For our purposes the following classical result will suffice.

\begin{lemma}[Zero density estimate] \label{zerodensitylemma}
\[
N(\sigma, T) \ll T^{\frac{3}{2}-\sigma} \log^5 T
\]
\end{lemma}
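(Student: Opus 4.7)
The bound $N(\sigma,T) \ll T^{3/2-\sigma}\log^5 T$ is a classical density estimate and my plan is to follow the standard mollifier approach going back to Carlson, Littlewood and Ingham (it is, for instance, presented in Chapter IX of Titchmarsh's book). The core idea is to detect zeros with $\beta > \sigma$ by means of a Dirichlet polynomial that is forced to be ``large'' at each such zero, and then to bound the number of large values by a second moment.

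First I would introduce the truncated Möbius series $M_X(s) \coloneqq \sum_{n \leq X} \mu(n) n^{-s}$ with $X$ a power of $T$ to be chosen. Multiplying out, $\zeta(s) M_X(s) = 1 + \sum_{n > X} a_n n^{-s}$ with $|a_n| \leq d(n)$, so if $\rho = \beta+i\gamma$ is a zero of $\zeta$ with $\beta \geq \sigma$ and $0 < \gamma \leq T$, then the tail satisfies $\bigl|\sum_{n>X} a_n n^{-\rho}\bigr| = 1$. Next I would push the information from the line $\Re s = \sigma$ back to the critical line: using a smooth cutoff and shifting contours (picking up only polynomial growth from the convexity bound for $\zeta$), one converts this into a statement that a suitably weighted Dirichlet polynomial $D(s) = \sum_{n} b_n n^{-s}$ with support in a range $X < n \leq Y = T^{O(1)}$ and coefficients $|b_n| \leq d(n)$ satisfies $|D(\tfrac12 + i\gamma)| \gg X^{\sigma - 1/2}$ for each zero $\rho$.

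The counting step is then standard. By the classical estimate $N(t+1) - N(t) \ll \log t$, I can extract from the set of ordinates $\{\gamma\}$ a $1$-separated subset of size $\gg N(\sigma,T)/\log T$. Montgomery--Vaughan's mean value theorem then gives
\[
N(\sigma,T) \cdot X^{2\sigma - 1} \ll \log T \int_0^{2T} |D(\tfrac12+it)|^2 \, dt \ll Y \log^{O(1)} T,
\]
where the second bound uses the orthogonality of $n^{-it}$ together with $\sum_n |b_n|^2 \ll Y \log^{O(1)} T$. Choosing $X$ optimally as a power of $T$ comparable to $T^{1/(2\sigma - 1)}$ (balancing the polynomial factors) yields the claimed $T^{3/2-\sigma} \log^5 T$ bound, with the $\log^5$ being the accumulated loss from the $d(n)$ coefficients and the $1$-separation thinning.

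The main obstacle, and the reason this is not quite a one-line argument, is getting the bookkeeping right in the reduction from $\beta \geq \sigma$ to ``large values on the critical line'': one needs either a careful contour shift using Perron's formula with smooth weights, or (equivalently) Gabriel's convexity principle, to avoid losing more than a small power of $\log T$. Once that is in place, the second moment input is completely routine, and since only the crude form of the estimate is needed in this paper I would feel free to simply quote Titchmarsh (Theorem 9.19A in the revised edition) rather than repeat the calculation in full.
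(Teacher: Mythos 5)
The paper's ``proof'' of this lemma is a one-line citation to Titchmarsh, Theorem 9.19(A), which is exactly where you land, so your approach matches the paper's. (Your sketch of the underlying argument is the standard zero-detection/mean-value route and is fine in spirit, though the optimal mollifier length for the exponent $T^{3/2-\sigma}$ is $X \asymp T^{1/2}$ rather than $T^{1/(2\sigma-1)}$; since you defer to the citation anyway, this does not affect anything.)
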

\begin{proof}
See Titchmarsh \cite[Theorem 9.19(A)]{titchmarsh}.
\end{proof}

We can now bound the contribution of $Z_\lambda(t)$ outside of an exceptional set.
\begin{lemma} \label{EVlemma}
Suppose $\log\log T \leq \lambda \leq \log T/2$ and $2 \leq V \leq \log T/\lambda$. Let 
\[
E_V \coloneqq \{t \in [T,2T] \colon |Z_\lambda(t)| > V\}.
\]
For all $T$ sufficiently large,
\begin{equation} \label{eq:exceptionalbd}
\meas E_V \ll T \exp(10\lambda)\exp\left(-d'\left(\frac{V \log T}{\lambda}\right)^{1/2}\right) 
\end{equation}
for some absolute constant $d' > 0$.
\end{lemma}
\begin{proof}
First we note that
\begin{align*}
Z_\lambda(t) &=  2\Im \sum_{\beta>\frac{1}{2}} \int_0^{\beta-\frac{1}{2}} K_\lambda(\gamma - t - i\alpha )\, d\alpha \\
&\ll \sum_{\beta>\frac{1}{2}} \int_0^{\beta-\frac{1}{2}} \frac{\lambda^2 \alpha e^{\lambda \alpha}}{1+(\lambda (\gamma-t))^2}\, d\alpha \\
&\ll \sum_{\beta > \frac{1}{2}}  \frac{\lambda^2 \left(\beta-\frac{1}{2}\right)^2 e^{\lambda \left(\beta-\frac{1}{2}\right)}}{1+(\lambda (\gamma-t))^2},
\end{align*}
where the second inequality can be deduced from the fact that the integrand is increasing in $\alpha$. In the final sum, each zero $\rho = \beta+i\gamma$ contributes a certain amount depending on the horizontal distance $\beta - 1/2$ and the vertical distance $|\gamma-t|$. We will split up the sum depending on whether $|\gamma - t|$ is small or large. 

To bound the sum over the zeros where $|\gamma-t|$ is small, we will need to assume that there are not ``too many'' zeros in short intervals near height $t$. To make this precise, we define an exceptional set
\[
E' \coloneqq \left\{t \in [T,2T]\  \colon 
\begin{array}{r} N(t+ \lambda^{-1} (k+1)) - N(t+\lambda^{-1} k) \geq \lambda^{-1} \log T \\
 \text{ for some $k = -\left\lfloor e^{2\lambda} \right\rfloor, \ldots, \left\lfloor e^{2\lambda} \right\rfloor-1$}
 \end{array}\right\}.
\]
The measure of $E'$ can be bounded using Lemma~\ref{zeroslemma}. Indeed, note that $E'$ may covered by $2\left\lfloor e^{2\lambda}\right\rfloor$ translates of the set
\[
\left\{t \in [T,2T] \colon N(t+\lambda^{-1})-N(t) \geq \lambda^{-1} \log T \right\}
\]
in addition to some small sets of length $\ll \lambda^{-1} e^{2\lambda}$ near the endpoints $T$ and $2T$. So by the union bound and Lemma~\ref{zeroslemma} we see that
\begin{align*}
\meas E' &\ll \lambda^{-1} e^{2\lambda} + T \exp(2\lambda) \exp(-d \lambda^{-1} \log T) \\
&\ll  T \exp(2\lambda) \exp(-d \lambda^{-1} \log T).
\end{align*}
Hence, the measure of $E'$ is less than the right-hand side of \eqref{eq:exceptionalbd} (as long as $d'$ is chosen to be small enough later) by our assumption that $V \leq \log T/\lambda$. This means that it will suffice for us to bound $Z_\lambda(t)$ for $t \in [T,2T] \setminus E'$ from now on. 

Fix a particular $t \in [T,2T] \setminus E'$ and define intervals $I_k \coloneqq (t+\lambda^{-1}k, t+\lambda^{-1}(k+1)]$ for $k = -\left\lfloor e^{2\lambda}\right\rfloor, \ldots, \left\lfloor e^{2\lambda}\right\rfloor-1$. Also define 
\[
\theta_t \coloneqq \max_{\substack{\rho = \beta+i \gamma \\ |\gamma-t| \leq e^{2\lambda}}} (\beta-1/2).
\]
Any zeta zero whose imaginary part lies in one of the intervals $I_k$ will have real part at most $\frac{1}{2}+\theta_t$. Therefore,
\[
\sum_{\beta > \frac{1}{2}}  \frac{\lambda^2 \left(\beta-\frac{1}{2}\right)^2 e^{\lambda \left(\beta-\frac{1}{2}\right)}}{1+(\lambda (\gamma-t))^2} \leq \sum_{k} \sum_{\gamma \in I_k} \frac{\lambda^2 \theta_t^2 e^{\lambda \theta_t}}{1+(\lambda (\gamma-t))^2} + \sum_{|\gamma - t| \geq \lambda^{-1}\left\lfloor e^{2\lambda}\right\rfloor} \frac{ \lambda^2 (\frac{1}{2})^2 e^{\lambda/2}}{1+(\lambda (\gamma-t))^2}
\]
where we have trivially bounded the real part of the zeta zeros in last sum by $1$. 

Since $t$ is not in the exceptional set $E'$, each interval $I_k$ contains the imaginary parts of at most $\lambda^{-1} \log T$ zeta zeros, and so
\[
\sum_{k} \sum_{\gamma \in I_k} \frac{\lambda^2 \theta_t^2 e^{\lambda \theta_t}}{1+(\lambda (\gamma-t))^2} \ll
\sum_{k} \frac{(\lambda^2 \theta_t^2 e^{\lambda \theta_t})(\lambda^{-1} \log T)}{1+(\min(|k|-1,0))^2} \ll \lambda \theta_t^2 e^{\lambda \theta_t} \log T.
\]

For the remaining sum (where $|\gamma-t|$ is large) we apply the classical bound $N(t'+1)-N(t') = O(\log t')$. We get
\begin{align*}
\sum_{|\gamma - t| \geq \lambda^{-1}\left\lfloor e^{2\lambda}\right\rfloor} \frac{ \lambda^2 (\frac{1}{2})^2 e^{\lambda/2}}{1+(\lambda (\gamma-t))^2} 
&\ll e^{\lambda/2} \sum_{|\gamma - t| \geq \lambda^{-1}\left\lfloor e^{2\lambda}\right\rfloor} \frac{1}{(\gamma - t)^2} \\
&\ll e^{\lambda/2} \sum_{k=\left\lfloor \frac{1}{2} \lambda^{-1} e^{2\lambda} \right\rfloor}^\infty \frac{1}{k^2} \log(T+k) \\
&\ll e^{\lambda/2} \frac{\lambda (\log T+\lambda)}{e^{2\lambda}}.
\end{align*}
The quantity in the last line tends to zero as $T$ goes to infinity because of the assumption that $\lambda \geq \log\log T$.  Hence if $T$ is sufficiently large we may conclude that the magnitude of this whole sum is less than $1$ say.

Putting together the bounds above we conclude for all $t \in [T,2T] \setminus E'$,
\[
|Z_\lambda(t)| \leq C \lambda \theta_t^2 e^{\lambda \theta_t} \log T + 1
\]
for some absolute constant $C >0$. 

Suppose now that $\theta_t \leq d' \left(\frac{V}{\lambda \log T}\right)^{1/2}$ for a small absolute constant $d' > 0$. Then 
\begin{align*}
Z_\lambda(t) \leq C \lambda \left(d'^2 \frac{V}{\lambda \log T} \right) e^{d' \left(\frac{V \lambda}{\log T}\right)^{1/2}} \log T + 1 \leq C d'^2  e^{d'} V + 1.
\end{align*}
Choosing $d'$ to be sufficiently small (depending on $C$) implies $|Z_\lambda(t)| \leq V$.

We now use the zero density estimate to show that $\theta_t$ is not larger than $d' \left(\frac{V}{\lambda \log T}\right)^{1/2}$ very often. Indeed, by Lemma~\ref{zerodensitylemma} we see that
\[
N\left(\frac{1}{2} + d' \left(\frac{V}{\lambda \log T}\right)^{1/2}, 3T \right) \ll T \log^5 T \exp\left(-d' \left(\frac{V \log T}{\lambda}\right)^{1/2}\right).
\]
This implies that $\theta_t > d' \left(\frac{V}{\lambda \log T}\right)^{1/2}$ on a set of measure
\[
\ll  T \log^5 T \exp(2\lambda) \exp\left(-d' \left(\frac{V \log T}{\lambda}\right)^{1/2}\right).
\]
The result follows.
\end{proof}

\section{Contribution of the Dirichlet polynomial} \label{sec:DirichletCont}

Recall that
\[
D_\lambda(t) = \sum_k \hat{\omega}\left(\frac{\log k}{\lambda}\right) \frac{\Lambda(k)}{(\log k)\sqrt{k}} k^{-it}.
\]
In this section we will prove a rigorous version of the probabilistic heuristics discussed in Section~\ref{sec:heuristics}. We do not strive for the most general lemma possible but rather one which is sufficient for our purposes.

\begin{lemma} \label{FVlemma}
Let $\theta \in [0, 2\pi)$ be an angle. Suppose $40 \log\log T \leq \lambda \leq \frac{1}{40}\log T$ and $\sqrt{\log \lambda} \leq V \leq \frac{1}{20}\left(\frac{\log T}{\lambda}\right)^{\frac{1}{2}}$. Let
\[
F_V \coloneqq \left\{t \in [T,2T] \colon \Re \left(e^{i\theta} D_\lambda(t) \right) \geq V\right\}.
\]
Then for all $T$ sufficiently large,
\[
\meas F_V \geq T \exp\left(-500\frac{V^2}{\log \frac{\lambda}{\log V}}\right).
\]
\end{lemma}

To prove the lemma we will use Soundararajan's resonance method which we describe now. Let
\[
R(t) \coloneqq \sum_{n \leq N} f(n) n^{-it}
\]
for some arithmetic function $f$ and some length $N$ to be chosen later. This Dirichlet polynomial is the \emph{resonator}. Let $\Phi$ be a smooth function supported in $[1,2]$, such that $0 \leq \Phi \leq 1$ and $\Phi(t) = 1$ for $t\in(5/4,7/4)$. We define the following quantities,
\begin{align*}
M_1(R,T) &\coloneqq \int |R(t)|^2 \Phi(\tfrac{t}{T})\, dt, \\
M_2(R,T) &\coloneqq \int \Re \left(e^{i \theta} D_\lambda(t)\right) |R(t)|^2 \Phi(\tfrac{t}{T})\, dt, \\
M_3(R,T) &\coloneqq \int |R(t)|^4 \Phi(\tfrac{t}{T})\, dt, \\
M_4(T) &\coloneqq \int |D_\lambda(t)|^4 \Phi(\tfrac{t}{T})\, dt.
\end{align*}

For any choice of resonator, clearly we have the inequality
\[
\max_{t \in [T,2T]} \Re \left(e^{i \theta} D_\lambda(t)\right) \geq \frac{M_2(R,T)}{M_1(R,T)}.
\]
Therefore, to show that $\Re (e^{i \theta} D_\lambda(t))$ attains large values, one should try to find a resonator that makes the ratio $M_2(R,T)/M_1(R,T)$ as large as possible. From a probabilistic perspective, we can think of $|R(t)|^2 \Phi(\frac{t}{T})$ as a (non-normalized) probability density.  If $t \in [T,2T]$ is chosen at random according to this density, the inequality above states that the maximum of $\Re (e^{i \theta} D_\lambda(t))$ is greater than or equal to the expected value of this quantity. Since we are free to choose the resonator, we can ``tip the scale'' so that the mass of the probability density is concentrated in places where $\Re (e^{i \theta} D_\lambda(t))$ is likely to be large.

Taking these ideas a step further, we can also get a lower bound on the probability of a large value of $\Re (e^{i \theta} D_\lambda(t))$. Observe that
\begin{align*}
M_2(R,T) &\leq V\, M_1(R,T) + \int_{F_V}\Re\left( e^{i \theta} D_\lambda(t) \right) |R(t)|^2 \Phi\left(\tfrac{t}{T}\right)\, dt \\
&\leq V\, M_1(R,T) + \big(\meas F_V\big)^{\frac{1}{4}} M_4(T)^{\frac{1}{4}} M_3(R,T)^{\frac{1}{2}}
\end{align*}
where the second line follows from the first by two applications of Cauchy's inequality. Now note that if the resonator is chosen such that $M_2(R,T) \geq 2 V\, M_1(R,T)$, then rearranging the inequality above gives
\begin{equation} \label{eq:FVbd}
\meas F_V \gg \frac{M_2(R,T)^4}{M_3(R,T)^2 M_4(T)}
\end{equation}
This bound is what we will use to prove the lemma. As we shall see, to make this bound as strong as possible the most important factor to control is $M_3(R,T)$ (that is, we want $M_3(R,T)$ to be small). Hence, our objective will be as follows: given some $V$ (and $\lambda$ and $T$) we must pick a resonator so that $M_2(R,T) \geq 2V\, M_1(R,T)$ with $M_3(R,T)$ is as small as possible.

To proceed with the proof of the lemma, we will first show how to estimate the integrals $M_1,M_2,M_3,M_4$ using standard techniques for estimating mean values of Dirichlet polynomials. Once this is done we will show how to make a suitable choice of $f$ and $N$.

\subsection{Integral estimates}
If $N \leq T^{1-\eps}$, we are able to estimate $M_1$ and $M_2$ as follows. Note that
\[
M_1(R,T) = \int_{-\infty}^\infty |R(t)|^2 \Phi(\tfrac{t}{T})\, dt = T \sum_{m,n \leq N} f(m)\overline{f(n)}\, \hat{\Phi}(T\log(m/n)).
\]
For the terms where $m \neq n$, we have that $|\log(m/n)| \gg T^{\eps-1}$.  Hence $\hat{\Phi}(T \log(m/n)) \ll_\eps T^{-2}$ (say) since $\hat{\Phi}$ is rapidly decaying. Therefore
\begin{align*}
M_1(R,T) &= T \hat{\Phi}(0) \sum_{n \leq N} |f(n)|^2 + O\left(T^{-1} \sum_{m,n \leq N} |f(m)| |f(n)| \right) \\
&= T\hat{\Phi}(0) (1+O(T^{-1})) \sum_{n\leq N} |f(n)|^2.
\end{align*}

Similarly, for $M_2(R,T)$ we have
\[
M_2(R,T) = T \Re e^{i\theta} \sum_{\substack{m,n \leq N \\ k}} \hat{\omega}\left(\frac{\log k}{\lambda}\right)\frac{\Lambda(k)}{(\log k)\sqrt{k}} f(m)\overline{f(n)}\, \hat{\Phi}(T\log(mk/n)).
\]
If $mk \neq n$ then $|\log(mk/n)| \gg T^{\eps-1}$, so $\hat{\Phi}(T \log(mk/n)) \ll_\eps T^{-2}$ in this case. Hence, the total contribution of these off-diagonal terms (i.e. the terms where $mk \neq n$) is bounded by
\[
T^{-1} \sum_k \hat{\omega}\left(\frac{\log k}{\lambda}\right)\frac{\Lambda(k)}{(\log k)\sqrt{k}} \sum_{m,n \leq N} |f(m)||f(n)|.
\]
By the hypothesis of the lemma $\lambda \leq \frac{1}{40} \log T$, the above sum over $k$ can be trivially bounded above by $e^\lambda \leq T^{\frac{1}{40}}$. Hence we may conclude that
\[
M_2(R,T) = T \hat{\Phi}(0) \Re e^{i\theta} \sum_{\substack{ mk = n \\ m,n \leq N}}  \hat{\omega}\left(\frac{\log k}{\lambda}\right)\frac{\Lambda(k)}{(\log k)\sqrt{k}} f(m)\overline{f(n)} + O\left(T^{\frac{1}{40}} \sum_{n\leq N} |f(n)|^2\right).
\]
The precise power of $T$ in the error term is not significant here. We could get any power saving we want since $\hat{\Phi}$ is rapidly decaying.

To estimate $M_3(R,T)$, the method is essentially the same as above. In this case we must assume $N \leq T^{\frac{1}{2}-\eps}$ to be able to bound the contribution of the off-diagonal terms. We will also make the assumption that the resonator coefficients satisfy $|f(n)| \leq 1$ for all $n$. Given these assumptions, we find that
\begin{align*}
M_3(R,T) &= T \sum_{m,n,m',n' \leq N} f(m)f(n)\overline{f(m')}\overline{f(n)} \,  \hat{\Phi}(T \log \tfrac{mn}{m'n'}) \\
&= T \hat{\Phi}(0) \sum_{\substack{m,n,m',n' \leq N \\ mn=m'n'}} f(m)f(n)\overline{f(m')}\overline{f(n')} + O\left(T^{-3} \sum_{m,n,m',n' \leq N} 1 \right) \\
&= T \hat{\Phi}(0) \sum_{\substack{m,n,m',n' \leq N \\ mn=m'n'}} f(m)f(n)\overline{f(m')}\overline{f(n')} + O(T^{-1}).
\end{align*}
In the second equality we used the bound $\hat{\Phi}(T \log \frac{mn}{m'n'}) \ll  T^{-4}$ when $mn \neq m'n'$, and the bound $|f(n)| \leq 1$. Once again we could have gotten any power saving of $T$ that we wanted in the error term.

Lastly, the same methods can also be used to estimate $M_4$. Using the assumption that $\lambda \leq \frac{1}{40} \log T$ to bound off-diagonal terms one may check that
\[
M_4(T)=\int_{-\infty}^{\infty} |D_\lambda(t)|^4 \Phi\left(\tfrac{t}{T}\right)\, dt = T \hat{\Phi}(0)(1+O(T^{-1})) \sum_{n} |c_n|^2
\]
where $c_n$ denotes the $n$th coefficient of the Dirichlet polynomial $D_\lambda(t)^2$. We only need an upper bound on $M_4$, so we proceed by finding an upper bound for $c_n$. Note that $c_n$ is only nonzero when $n$ is a prime power or a product of two prime powers. It is not hard to check that
\[
c_n \ll 
\begin{cases} 
\frac{\log(r+2)}{r} \frac{1}{\sqrt{p^r}} &\text{if }n=p^r \leq e^{2\lambda}, \\
\frac{1}{rs}\frac{1}{\sqrt{p^r q^s}} &\text{if $n=p^r q^s$ such that $p \neq q$ and $p^r, q^s \leq e^\lambda$}, \\
0 &\text{otherwise.}
 \end{cases}
\]

So
\begin{align*}
\sum_n |c_n|^2 &\ll \sum_{p^r,q^s \leq e^\lambda} \frac{1}{r^2 s^2} \frac{1}{p^r q^s} + \sum_{p^r \leq e^{2\lambda}} \frac{\log^2 (r+2)}{r^2} \frac{1}{p^r} \\
&\ll \left(\sum_{p^r \leq e^\lambda} \frac{1}{r^2} \frac{1}{p^r}\right)^2 +  \sum_{p \leq e^{2\lambda}} \frac{1}{p} \\
&\ll \left(\sum_{p \leq e^\lambda} \frac{1}{p}\right)^2 +  \sum_{p \leq e^{2\lambda}} \frac{1}{p} \\
&\ll (\log \lambda)^2,
\end{align*}
where in the last line we have used the classical bound $\sum_{p\leq x} \frac{1}{p} \ll \log \log x$. We conclude that
\[
M_4(T) \ll T \left(\log \lambda\right)^2.
\]

\subsection{Resonator coefficients}
From now on we set $N = T^{\frac{1}{4}}$ since this is sufficient for the integral estimates discussed above to hold. Given $V$, $\lambda$, and $T$ satisfying the hypotheses of the lemma, we would like to optimize our choice of the coefficient function $f(n)$. As we have stated previously, this means choosing an $f$ such that $M_3(R,T)$ is as small as possible while also ensuring that $M_2(R,T)/M_1(R,T)$ is at least $2V$.

Rather than asserting a complete definition of $f$ right away, for now we just impose some restrictions on $f$ and then we will continue with our analysis assuming these restrictions hold. The restrictions are
\begin{enumerate}[\quad\quad (a)]
\item $|f(n)| \leq 1$ (we already used this in our estimate of $M_3(R,T)$ above),
\item $f$ is a multiplicative function supported on squarefree numbers composed of primes in the set $\mathcal{P} \coloneqq \{P \leq p \leq Q\}$ for some parameters $P, Q \geq 3$,
\item $f(p) = e^{i \theta} |f(p)|$,
\item $ - \frac{\log N}{\log Q} + \frac{2}{\log Q} \sum_{p} (\log p) |f(p)|^2 \leq -5$.
\end{enumerate}
The reasons for each of these restrictions will become clear shortly when we use them to deduce a lower bound for $M_2(R,T)/M_1(R,T)$ and an upper bound for $M_3(R,T)$ each of which takes a relatively simple form.

\subsubsection{Lower bound on $M_2(R,T)/M_1(R,T)$}
From our integral estimates, the main term of the quotient $M_2(R,T)/M_1(R,T)$ is
\begin{equation} \label{eq:Mratio}
\Re \left(e^{i\theta} \sum_{\substack{ mk = n \\ m,n \leq N}}  \hat{\omega}\left(\frac{\log k}{\lambda}\right)\frac{\Lambda(k)}{(\log k)\sqrt{k}} f(m)\overline{f(n)} \right) \bigg\slash \left( \sum_{n\leq N} |f(n)|^2 \right).
\end{equation}

We start by considering the numerator of this ratio. Note that in order for a term in the sum to be nonzero, by restriction (b) it is necessary that $k$ be a prime $p \in \mathcal{P}$ and $p$ be distinct from the primes which divide $m$. In this case we have $\overline{f(n)}=\overline{f(p)f(m)}=e^{-i\theta} |f(p)| \overline{f(m)}$ by restriction (c). So we see that the numerator equals
\[
\sum_{p}  \hat{\omega}\left(\frac{\log p}{\lambda}\right)\frac{|f(p)|}{\sqrt{p}}\sum_{\substack{m \leq N/p \\ p \nmid m}}|f(m)|^2.
\]

We will now approximate the inner sum by removing the cutoff at $N/p$. To bound the error (i.e. the sum of tail terms $m>N/p$) we use Rankin's trick. This trick is the observation that for any sequence $\{a_m\}_{m \in \N}$ of nonnegative reals and $\alpha > 0$, 
\[
\sum_{m > X} a_m \leq X^{-\alpha} \sum_{m=1}^\infty m^\alpha a_m.
\]
Applying this to the situation at hand, the numerator in \eqref{eq:Mratio} is at least
\begin{equation} \label{eq:rankined}
\sum_{p}  \hat{\omega}\left(\frac{\log p}{\lambda}\right)\frac{|f(p)|}{\sqrt{p}}\sum_{\substack{m \\ p \nmid m}}|f(m)|^2 - \sum_{p}  \hat{\omega}\left(\frac{\log p}{\lambda}\right)\frac{|f(p)|}{\sqrt{p}}\left(\frac{p}{N}\right)^\alpha \sum_{\substack{m \\ p \nmid m}}m^\alpha |f(m)|^2
\end{equation}
for any $\alpha > 0$. We will now show that the second double sum in \eqref{eq:rankined} is negligible relative to the first double sum (for an appropriate choice of $\alpha$). For notational convenience, define
\[
\Psi \coloneqq \sum_p \hat{\omega}\left(\frac{\log p}{\lambda}\right) \frac{|f(p)|}{\sqrt{p}}.
\]

Note that the inner sum in the main term of \eqref{eq:rankined} satisfies
\[
\sum_{\substack{m \\ p \nmid m}}|f(m)|^2 = \prod_{\substack{q\text{ prime} \\ q \neq p}} (1+|f(q)|^2) \geq
\frac{1}{2} \prod_{q\text{ prime}} (1+|f(q)|^2)
\]
since $|f(q)| \leq 1$ by restriction (a). So we see that the main term of \eqref{eq:rankined} is
\[
\geq \frac{1}{2} \Psi \prod_{q\text{ prime}} (1+|f(q)|^2).
\]

To upper bound the error term we choose $\alpha = 1/\log Q$. This implies $p^\alpha \leq e$ for $p \in \mathcal{P}$, so the magnitude of the error term is
\begin{align*}
&\leq \Psi \,e N^{-\alpha}  \sum_{m}m^\alpha |f(m)|^2 \\
&= \Psi \, e N^{-\alpha} \prod_{q\text{ prime}}(1+q^\alpha |f(q)|^2).
\end{align*}

Hence the magnitude of the ratio of the error term to the main term is
\begin{align*}
&\leq 2e N^{-\alpha} \prod_{q\text{ prime}} \frac{1+q^\alpha |f(q)|^2}{1+|f(q)|^2} \\
&\leq 2e \exp\left(-\alpha \log N + \sum_{q\in \mathcal{P}} (q^\alpha-1)|f(q)|^2\right).
\end{align*}
Since $q^\alpha-1 \leq 2 \alpha \log q$ for $q \in \mathcal{P}$, this is
\begin{align*}
&\leq 2e \exp\left(-\frac{\log N}{\log Q} + \frac{2}{\log Q} \sum_{q \in \mathcal{P}} (\log q) |f(q)|^2\right) \leq 2e \exp(-5) \leq \frac{1}{10}
\end{align*}
by restriction (d). We conclude that the error term in \eqref{eq:rankined} is small relative to the main term, so the numerator of \eqref{eq:Mratio} is at least
\[
\left(1-\frac{1}{10}\right) \frac{1}{2} \Psi\, \prod_{q\text{ prime}} (1+|f(q)|^2) = \frac{9}{20} \Psi\, \prod_{q\text{ prime}} (1+|f(q)|^2)
\]
The denominator of \eqref{eq:Mratio} is clearly bounded above by $\prod_{q\text{ prime}} (1+|f(q)|^2)$, so the quotient is at least $\frac{9}{20}\Psi$. Using this along with our integral estimates for $M_1(R,T)$ and $M_2(R,T)$ we conclude that
\begin{equation} \label{eq:Mratio2}
\frac{M_2(R,T)}{M_1(R,T)} \geq (1+O(T^{-1})) \frac{9}{20}\Psi +O(T^{-\frac{39}{40}}).
\end{equation}

\subsubsection{Upper bound on $M_3(R,T)$}
Note that
\begin{align*}
\left| \sum_{\substack{m,n,m',n' \leq N \\ mn=m'n'}} f(m)f(n)\overline{f(m')}\overline{f(n')} \right|
&\leq \prod_p (1+4|f(p)|^2+|f(p)|^4) \\
&\leq \exp\left(5  \sum_{p} |f(p)|^2\right)
\end{align*}
where we have again used that$|f(p)| \leq 1$ by restriction (b). By our integral estimate for $M_3(R,T)$ we conclude
\begin{equation} \label{eq:M3bd}
M_3(R,T) \ll T  \exp\left(5  \sum_{p} |f(p)|^2\right).
\end{equation}
So minimizing this quantity amounts to minimizing the sum $\sum_{p} |f(p)|^2.$

\subsubsection{Optimizing parameters} Recall that
\[
\Psi = \sum_p \hat{\omega}\left(\frac{\log p}{\lambda}\right) \frac{|f(p)|}{\sqrt{p}}.
\]
If $f$ is chosen so that $\Psi \geq 5V$, then the lower bound \eqref{eq:Mratio2} implies that $M_2(R,T)/M_1(R,T) \geq 2V$ for all $T$ larger than some absolute constant. This gives us the following optimization problem:
\[
\text{minimize }\sum_p |f(p)|^2, \text{ subject to the constraint }\Psi \geq 5V.
\]

To solve this optimization problem, note that applying Cauchy's inequality to the definition of $\Psi$ gives
\[
\Psi \leq \left(\sum_{p \in \mathcal{P}} |f(p)|^2\right)^\frac{1}{2} \left(\sum_{p \in \mathcal{P}} \hat{\omega}\left(\tfrac{\log p}{\lambda}\right)^2 \frac{1}{p}\right)^\frac{1}{2}.
\]
Hence, the constraint $\Psi \geq 5V$ implies that
\begin{equation} \label{eq:Cauchybd}
\sum_{p \in \mathcal{P}} |f(p)|^2 \geq \frac{25 V^2 }{\sum_{p \in \mathcal{P}}\hat{\omega}\left(\tfrac{\log p}{\lambda}\right)^2 \frac{1}{p}}.
\end{equation}
By the equality case of Cauchy's inequality, this lower bound can be attained by choosing $|f(p)|$ to be a certain fixed scalar multiple of $\hat{\omega}\left(\frac{\log p}{\lambda}\right) \frac{1}{\sqrt{p}}$  for each $p \in \mathcal{P}$. Indeed, we see that taking
\begin{equation} \label{eq:optimalfp}
|f(p)| \coloneqq  \frac{5 V}{\sum_{q \in \mathcal{P}}\hat{\omega}\left(\tfrac{\log q}{\lambda}\right)^2 \frac{1}{q}} \hat{\omega}\left(\frac{\log p}{\lambda}\right) \frac{1}{\sqrt{p}}.
\end{equation}
makes \eqref{eq:Cauchybd} into an equality and also makes $\Psi = 5V$.

Combining everything that we have seen so far, as long as all the restrictions (a),(b),(c), and (d) are satisfied when we define $|f(p)|$ according to \eqref{eq:optimalfp} then
\[
\frac{M_2(R,T)}{M_1(R,T)} \geq 2V
\]
and 
\[
M_3(R,T) \ll T \exp\left(\frac{125 V^2 }{\sum_{p \in \mathcal{P}}\hat{\omega}\left(\tfrac{\log p}{\lambda}\right)^2 \frac{1}{p}}\right).
\]
Hence it is advantageous to select $P$ as small as our restrictions allow and $Q$ as large as our restrictions allow so that the upper bound on $M_3(R,T)$ is as strong as possible. We will make the following choice of parameters,
\begin{align*}
P &\coloneqq 25 V^2, \\
Q &\coloneqq e^{\lambda}.
\end{align*} 

We must now check that the restrictions are satisfied. For restrictions (b) and (c) there is nothing to check. For restriction (a), we first note that a routine calculation using the prime number theorem implies that
\begin{align*}
\sum_{p \in \mathcal{P}}\hat{\omega}\left(\tfrac{\log p}{\lambda}\right)^2 \frac{1}{p} &= \log \frac{\lambda}{\log 25V^2}-\frac{3}{2}+2 \frac{\log25 V^2}{\lambda} -\frac{1}{2} \left(\frac{\log 25V^2}{\lambda}\right)^2+o_{T\to \infty}\left(1\right) \\
&= \log \frac{\lambda}{\log V} - \log 2 - \frac{3}{2} + 4 \frac{\log V}{\lambda} - 2 \left(\frac{\log V}{\lambda}\right)^2 + o_{T\to\infty}(1).
\end{align*}
We can conclude that if the ratio $\frac{\lambda}{\log V}$ is large enough then this implies
\begin{equation} \label{eq:Psumbd}
\sum_{p \in \mathcal{P}}\hat{\omega}\left(\tfrac{\log p}{\lambda}\right)^2 \frac{1}{p} \geq 1
\end{equation}
and also
\begin{equation} \label{eq:Psumbd2}
\sum_{p \in \mathcal{P}}\hat{\omega}\left(\tfrac{\log p}{\lambda}\right)^2 \frac{1}{p} \geq \frac{1}{2}\log \frac{\lambda}{\log V}
\end{equation}
for all $T$ larger than some absolute constant. By the hypotheses of the lemma, $\lambda \geq 40 \log\log T$ and $\log V \leq \frac{1}{2}\log \log T$. Hence $\frac{\lambda}{\log V} \geq 80$ which one can verify is sufficiently large.

Applying \eqref{eq:Psumbd} and the fact that $\hat{\omega} \leq 1$ to our choice of $|f(p)|$ we see that
\[
|f(p)| \leq \frac{5V}{\sqrt{p}} \leq \frac{5V}{\sqrt{P}} \leq 1
\]
for any $p \in \mathcal{P}$. Hence, restriction (a) holds. 

Lastly we must check restriction (d). Note that \eqref{eq:Psumbd} implies $\sum_{p} |f(p)|^2 \leq 25V^2$. Using this we see that the left-hand side of restriction (d) is
\begin{align*}
-\frac{\log N}{\log Q} + \frac{2}{\log Q} \sum_{p \in \mathcal{P}} (\log p) |f(p)|^2
&= -\frac{1}{4}\frac{\log T}{\lambda} + \frac{2}{\lambda}\sum_{p \in \mathcal{P}} (\log p) |f(p)|^2  \\
&\leq -\frac{1}{4}\frac{\log T}{\lambda} + 2 \sum_{p \in \mathcal{P}} |f(p)|^2 \\
&\leq  -\frac{1}{4}\frac{\log T}{\lambda} + 50V^2\\
&\leq -\frac{1}{4}\frac{\log T}{\lambda} + \frac{1}{8}\frac{\log T}{\lambda} \\
&\leq -5.
\end{align*}
The second to last inequality follows from the hypothesis that $V \leq \frac{1}{20}\left(\frac{\log T}{\lambda}\right)^\frac{1}{2}$, and the last inequality follows from the hypothesis that $\lambda \leq \frac{1}{40}\log T$. Hence restriction (d) is satisfied.

So we have now seen that restrictions (a),(b),(c),(d) all hold. As we have seen, this means $M_2(R,T)/M_1(R,T) \geq 2V$ and
\[
M_3(R,T) \ll T \exp\left(\frac{125 V^2}{\sum_{p \in \mathcal{P}}\hat{\omega}\left(\tfrac{\log p}{\lambda}\right)^2 \frac{1}{p}}\right) \leq T \exp\left(\frac{250 V^2}{\log\frac{\lambda}{\log V}}\right)
\]
where the second inequality follows from \eqref{eq:Psumbd2}.

We may then conclude from \eqref{eq:FVbd} that
\begin{align*}
\meas F_V &\gg \frac{M_2(R,T)^4}{M_3(R,T)^2 M_4(T)} \\
&\gg T \exp\left(-500\frac{V^2}{\log\frac{\lambda}{\log V}}\right)\, V^4 (\log \lambda)^{-2} \\
&\gg T \exp\left(-500 \frac{V^2}{\log\frac{\lambda}{\log V}}\right).
\end{align*}
In the second inequality we have used our integral estimates and that $M_2(R,T) \gg V M_1(R,T) \gg V T$. In the last inequality we have used the hypothesis that $V \geq \sqrt{\log \lambda}$. This completes the proof of the lemma.

\section{Proof of Theorem \ref{Sthm}} \label{sec:finalSthm}
We will now prove Theorem~1 by combining the convolution formula with the measure bounds proved in the previous two sections. Suppose that we are given $V$ satisfying the bounds $\sqrt{\log \log T} \leq V \leq \kappa \left(\frac{\log T}{\log\log T}\right)^\frac{1}{3}$ where $\kappa$ is a small constant to be chosen later. We assume $T$ is large, and hence $V$ is large as well.

To apply the lemmas we will be selecting a value of the parameter $\lambda$ depending on $V$ and $T$. Before we do this, let us reestablish some of the notation from the lemmas. Recall that the convolution formula (i.e. Lemma~\ref{Sconvlemma}) states that
\[
\int_{-\log T}^{\log T} S(t+u) K_\lambda(u)\, du = \frac{1}{\pi}\Im D_\lambda(t) + Z_\lambda(t) + O(1).
\]
The contribution of the $Z_\lambda(t)$ term was discussed in Lemma~\ref{EVlemma}. In particular, as long as $\lambda$ and $V$ satisfy the necessary hypotheses, the lemma gives an upper bound on the measure of
\[
E_V = \{t \in [T,2T] \colon |Z_\lambda(t)| > V\}.
\]
Similarly, the contribution of the $\frac{1}{\pi}\Im D_\lambda(t)$ term is handled by Lemma~\ref{FVlemma}. Taking $\theta = \frac{3\pi}{2}$ in the lemma, we can get a lower bound on the measure of the set
\[
F_{4\pi V} =  \{t \in [T,2T] \colon \Re\left(e^{\frac{3 \pi}{2}i} D_\lambda(t)\right) > 4 \pi V\} = \{t \in [T,2T] \colon \tfrac{1}{\pi} \Im D_\lambda(t) > 4 V\}.
\]
Let us now define another set $G_{2V}$ as 
\[
G_{2V} \coloneqq \left\{ t \in [T,2T] \colon \int_{-\log T}^{\log T} S(t+u) K_\lambda(u)\, du > 2V \right\}.
\]
The convolution formula implies that $G_{2V} \supseteq F_{4\pi V} \setminus E_V$, so combining the measure bounds discussed above will yield a lower bound on the measure of $G_{2V}$.

We now select the value of the $\lambda$ parameter. We will consider two cases depending on whether $V > (\log T)^{\frac{2}{9}}$ or $V \leq (\log T)^{\frac{2}{9}}$. In the first case, let $\lambda = (\log T)/V^3$. This implies $\kappa^{-3} \log\log T \leq \lambda < (\log T)^\frac{1}{3}$ and one can verify that the hypotheses of our lemmas are satisfied as long as $\kappa$ is somewhat small. Applying Lemma~\ref{EVlemma} and Lemma~\ref{FVlemma} gives
\begin{align*}
&\meas E_V \ll T \exp\left(10 (\log T)^\frac{1}{3}\right) \exp(-d' V^2) \ll T \exp\left(-\frac{d'}{2} V^2\right), \text{ and} \\
&\meas F_{4\pi V} \gg T \exp\left(-500 \frac{(4\pi V)^2}{\log \frac{\log T}{V^3 \log 4\pi V}} \right) \gg T \exp\left(-D' \frac{V^2}{\log \frac{\log T}{V^3 \log V}} \right)
\end{align*}
where $D' > 0$ is a large absolute constant. By taking $\kappa$ very small, we can ensure that the quantity $\log \frac{\log T}{V^3 \log V}$ is uniformly larger than say $10D'/d'$.  In particular, this ensures that the measure of $F_{4\pi V}$ is at least twice the measure of $E_V$. 

Now consider the second case where $\sqrt{\log\log T} \leq V \leq (\log T)^{\frac{2}{9}}$. Here we choose $\lambda = (\log T)^\frac{1}{10}$. It is again easy to verify that the hypotheses of Lemmas~\ref{EVlemma} and \ref{FVlemma} are satisfied, and they imply that
\begin{align*}
&\meas E_V \ll T \exp\left(10 (\log T)^\frac{1}{10}\right) \exp(-d' V^\frac{1}{2} (\log T)^\frac{9}{20}) \ll T \exp\left(- (\log T)^\frac{9}{20}\right), \text{ and} \\
&\meas F_{4\pi V} \gg T \exp\left(-500 \frac{(4 \pi V)^2}{\log \frac{\log T}{10 \log 4 \pi V}} \right) \gg T \exp\left(-D'\frac{V^2}{\log \frac{\log T}{V^3 \log V}} \right).
\end{align*}
Once again we have that the measure of $F_{4\pi V}$ is at least twice that of $E_V$ for sufficiently large $T$ since $\meas F_{4\pi V} \gg T \exp(-(\log T)^\frac{4}{9})$ in this case and $\tfrac{4}{9} < \tfrac{9}{20}.$

Putting together the two cases, we have just confirmed that for any $V$ satisfying the hypotheses of the theorem it is possible to select $\lambda$ such that
\begin{equation} \label{eq:G2Vbd}
\meas G_{2V} \geq \meas F_{4\pi V} - \meas E_V \gg \meas F_{4\pi V} \gg T \exp\left(-D'\frac{V^2}{\log \frac{\log T}{V^3 \log V}} \right).
\end{equation}
This is nearly the bound stated in the conclusion of the theorem, but $G_{2V}$ consists of points in $[T,2T]$ where a \emph{smoothing} of $S(t)$ is greater than $2V$. It remains to convert this into a lower bound on the measure of the set where $S(t)$ is greater than $V$.

To pass from \eqref{eq:G2Vbd} to the conclusion of the theorem, first note that
\[
\int_{G_{2V}} \int_{-\log T}^{\log T} S(t+u) K_\lambda(u)\, du\, dt \geq 2V\, \meas G_{2V}
\]
by the definition of $G_{2V}$. The double integral on the left-hand side can be rewritten (by a change of variables and Fubini's theorem) as an integral of $S(u)$ times a smoothing of the indicator function of $G_{2V}$. Indeed, letting $1_A(u)$ denote the indicator function of a set $A$ and letting $\widetilde{K_\lambda}(u) \coloneqq K_\lambda(u) 1_{[-\log T, \log T]}(u)$, one quickly verifies that
\[
\int_{G_{2V}} \int_{-\log T}^{\log T} S(t+u) K_\lambda(u)\, du dt = \int S(u) (1_{G_{2V}} * \widetilde{K_\lambda})(u)\, du.
\]
Note that the support of the convolution $1_{G_{2V}} * \widetilde{K_\lambda}$ is contained in the interval $[T-\log T, 2T+\log T]$ so we can assume the last integral is taken over this bounded interval.

Letting $H_V \coloneqq \{ t \in [T-\log T,2T + \log T] \colon S(t) > V\}$, and using the same Paley-Zygmund inequality idea as in the previous section we have
\begin{align*}
\int S(u) (1_{G_{2V}} * \widetilde{K_\lambda})(u)\, du &\leq  V \int (1_{G_{2V}} * \widetilde{K_\lambda})(u)\, du + \int_{H_V} S(u) (1_{G_{2V}} * \widetilde{K_\lambda})(u)\, du \\
&\leq V\, ||1_{G_{2V}} * \widetilde{K_\lambda}||_1 + ||1_{G_{2V}} * \widetilde{K_\lambda}||_\infty \int_{H_V} S(u)\, du \\
&\leq V\, \meas G_{2V} + (\meas H_V)^\frac{1}{2} \left(\int_{T-\log T}^{2T+\log T} |S(u)|^2\, du\right)^\frac{1}{2}.
\end{align*}
The last inequality follows from the fact that $||1_{G_{2V}} * \widetilde{K_\lambda}||_1 \leq ||1_{G_{2V}}||_1 ||\widetilde{K_\lambda}||_1 \leq \meas G_{2V}$ and $||1_{G_{2V}} * \widetilde{K_\lambda}||_\infty \leq ||1_{G_{2V}}||_\infty ||\widetilde{K_\lambda}||_1 \leq 1$, and from applying Cauchy's inequality to the integral. Since the left-hand side of this string of inequalities is $\geq 2V\, \meas G_{2V}$ by our observations in the previous paragraph, rearranging we see that
\begin{equation} \label{eq:HVbd}
\meas H_V \geq V^2 \left(\meas G_{2V}\right)^2  \left(\int_{T-\log T}^{2T+\log T} |S(u)|^2\, du\right)^{-1}.
\end{equation}
The moments of $S(t)$ were estimated unconditionally by Selberg in \cite[Thm. 6]{selberg1946}. In particular, we have
\[
\int_{T-\log T}^{2T+\log T} |S(t)|^2 \, dt \ll T \log\log T.
\]
Inserting this and the lower bound \eqref{eq:G2Vbd} into \eqref{eq:HVbd} we conclude that
\begin{align*}
\meas H_V &\gg V^2 T^2 \exp\left(-2 D'\frac{V^2}{\log \frac{\log T}{V^3 \log V}} \right) \left(T \log\log T\right)^{-1} \\
&\gg T \exp\left(-D \frac{V^2}{\log \frac{\log T}{V^3 \log V}} \right)
\end{align*}
for an absolute constant $D > 0$. In the last inequality we used the assumption that $V \geq \sqrt{\log \log T}$. The intervals $[T-\log T, T)$ and $(2T, 2T+\log T]$ have very small measure compared to this lower bound, so they can safely be thrown away. This gives the conclusion stated in the theorem for positive values of $S(t)$ (after possibly adjusting the constant $D$ to turn the $\gg$ bound we have just proved into a $\geq$ bound). To get the same conclusion for $-S(t)$ the proof is identical except that we select $\theta = \frac{\pi}{2}$ in the application of Lemma~\ref{FVlemma}.

\section{Proof of Theorem \ref{Sthm2}} \label{sec:finalSthm2}
We now assume the Riemann hypothesis, and we shall indicate how the argument in the previous section can be modified to prove Theorem 2. Suppose $T$ is large and suppose $\sqrt{\log \log T} \leq V \leq \widetilde{\kappa} \left(\frac{\log T}{\log\log T}\right)^\frac{1}{2}$ where $\widetilde{\kappa}$ is a small constant to be chosen later.

Let $E_V$, $F_{4\pi V}$, and $G_{2V}$ denote the same sets as in the previous section. Because we have assumed the Riemann hypothesis, the $Z_\lambda(t)$ term in the convolution formula is always zero, so the set $E_V$ is empty. Consequently $G_{2V} \supseteq F_{4\pi V}$. As in the proof of Theorem 1, we must make a choice for the parameter $\lambda$ and then apply Lemma~\ref{FVlemma} to get a lower bound on the measure of $F_{4\pi V}$. In this case we select $\lambda = (\log T)/(80\pi V)^2$. One may verify that the hypotheses of Lemma~\ref{FVlemma} are satisfied as long as $\widetilde{\kappa}$ is rather small. In particular note that the constant $80\pi$ is chosen so that $4\pi V \leq \frac{1}{20}\left(\frac{\log T}{\lambda}\right)^{\frac{1}{2}}$ is satisfied. Hence we find
\[
\meas G_{2V} \geq \meas F_{4\pi V} \gg T \exp\left(-500 \frac{(4\pi V)^2}{\log \frac{\log T}{(80 \pi V)^2 \log 4\pi V}} \right) \gg T \exp\left(-\widetilde{D}' \frac{V^2}{\log \frac{\log T}{V^2 \log V}} \right)
\]
where $\widetilde{D}'>0$ is a large absolute constant. One can then repeat the argument in the previous section to pass from this lower bound on $\meas G_{2V}$ to the bound stated in Theorem~2.

\bibliographystyle{amsplain}
\bibliography{references}
\end{document}